\newtheorem{theorem}{Theorem}[section]
\newtheorem{lemma}[theorem]{Lemma}
\newtheorem{corollary}[theorem]{Corollary}
\newtheorem{proposition}[theorem]{Proposition}
\theoremstyle{definition}
\newtheorem{definition}[theorem]{Definition}
\newtheorem{example}[theorem]{Example}
\theoremstyle{remark}
\newtheorem{remark}[theorem]{Remark}
\DeclareMathOperator{\charac}{char}
\DeclareMathOperator{\Z}{\mathbb{Z}}
\DeclareMathOperator{\N}{\mathbb{N}}
\DeclareMathOperator{\Q}{\mathbb{Q}}
\DeclareMathOperator{\Spec}{Spec}
\newcommand\blfootnote[1]{%
	\begingroup
	\renewcommand\thefootnote{}\footnote{#1}%
	\addtocounter{footnote}{-1}%
	\endgroup
}
\begin{document}
	\title[Common valuations of division polynomials]{Common valuations of division polynomials}
	
	\author{Bartosz Naskręcki, Matteo Verzobio}
	
		\blfootnote{\emph{Keywords}: Elliptic curves, N\'{e}ron models, division polynomials, height functions, discrete valuation rings}
	\blfootnote{2020 \emph{Mathematics Subject Classification}: 14H52, 11G05, 11G07, 11G50}
	\begin{abstract}
		In this note, we prove a formula for the cancellation exponent $k_{v,n}$ between division polynomials $\psi_n$ and $\phi_n$ associated with a sequence $\{nP\}_{n\in\mathbb{N}}$ of points on an elliptic curve $E$ defined over a discrete valuation field $K$. The formula greatly generalizes the previously known special cases
		and treats also the case of non-standard Kodaira types for
		non-perfect residue fields.
	\end{abstract}
	\maketitle
	\section{Introduction}
	Let $P$ be a point on an elliptic curve $E$ defined over a discrete valuation field $K$. In this paper, we study some properties of the sequence $\{nP\}_{n\in\mathbb{N}}$ of the multiples of the point $P$. It is known that this sequence has some remarkable properties, in particular when a Weierstrass model is fixed
	\[y^2+a_1xy+a_3y=x^3+a_2x^2+a_4x+a_6\]
	the elements $nP=\left(\frac{\phi_n(P)}{\psi_n^2(P)},\frac{\omega_n(P)}{\psi_n^3(P)}\right)$ for $P\in E(K)$ satisfy certain recurrent formulas (see properties (A), (B), (C), p. 4 or \cite[Exercise 3.7]{aoc}).
	
	When the curve $E$ is defined over a number field the sequence of polynomials $(\psi_{n}(x,y))_{n\in\mathbb{N}}$ has been studied as an example of a divisibility sequence, a notion which was studied classically for Lucas sequences and by Ward for higher degree recurrences. Many results are known, including \cite{ward} and \cite{silvermanprimitive}.
	
	Over function fields, in particular over the function field of a smooth projective curve $C$ over an algebraically closed field $k$, one can study the set of effective divisors $\{D_{nP}\}$ where $D_{nP}$ is the divisor of poles of the element $x(nP)$ for a fixed $n$, cf. \cite{ingrammahesilvstreng}, \cite{Naskrecki_NYJM}, \cite{Naskrecki_Streng}, \cite{Cornelissen}.

	The question which we investigate in this paper is to what extent, for a fixed discrete valuation $v$ in the field $K$, the numbers $v(\psi_n(P))$ and $v(\phi_{n}(P))$ can be both positive and what is the maximum exponent $r$ for the power $\pi^{r}$ of the local at $v$ uniformizing element $\pi\in K$ which divides both $\psi_n(P)$ and $\phi_n(P)$. Such a situation only happens when the point $P$ has bad reduction at the place $v$ and we produce a complete formula that deals with all Kodaira reduction types. 
	
	Our main result shows that the main theorem of Yabuta and Voutier
	\cite{yv21}, which applies for finite extensions of $\Q_p$, holds for more general discrete valuation fields. Our proof applied to the case of
	$\Q_p$ is much shorter and depends exclusively on the elementary
	properties of the Néron local heights. Up to some standard facts from \cite{Lang_dioph_anal}, \cite{Lang_Fundamentals}, and \cite{Silverman_Advanced} our proof of Equations \eqref{eq:kvnnon} and \eqref{eq:kvsin} is essentially self-contained. The calculations that reveal the exact form of $k_{v,n}$ for a given reduction type of place $v$ are calculated using explicit arithmetic models, cf. \cite{cremonaprickett}, \cite{Tate_algorithm}, and \cite{Szydlo_PhD}.
	
	Let $n$ be a positive integer and define
	\begin{equation}
		k_{v,n}=\min\left\{v\left(\psi_n^2(P)\right),v\left(\phi_n(P)\right)\right\}.
	\end{equation}

	\begin{theorem}\label{main}
		Let $R$ be a discrete valuation ring with quotient field $K$. Let $v$ be the valuation of $K$. Let $E/K$ be an elliptic curve defined by a Weierstrass model in minimal form with respect to $v$, let $P=(x,y)\in E(K)$ and let $n$ be a positive integer.
		
		If $P\neq O$ is non-singular modulo $v$, then
		\begin{equation}\label{eq:kvnnon}
			k_{v,n}=\min\left(0,n^2v(x(P))\right).
		\end{equation}
		
		If $P\neq O$ is singular modulo $v$, then
		\begin{equation}\label{eq:kvsin}
			k_{v,n}=n^2c_v(P)-c_v(nP).
		\end{equation}
		
		The function $c_v(nP)$ will be defined in Definition \ref{def:cv} and it is periodic in $n$.
		The value of $n^2c_v(P)-c_v(nP)$ is computed in Table \ref{tab:common_vals}, for the case of standard reduction types at $v$, and in Table \ref{tab:common_vals_non_std} for the case of non-standard reduction types. The constants $m_P$ and $a_P$ will be defined in the next section.
	\end{theorem}
	
	\begin{table}[!htb]
		\begin{center}
			\scalebox{0.9}{\begin{tabular}{c|c|cl}
					Kodaira symbol       & $m_{P}$ & $k_{v,n}=n^2c_v(P)-c_v(nP)$ & \\ 
					\hline
					$III^{*}$                     &   $2$   &     $3n^2/2$       & \text{{\rm if} $n \equiv 0 \bmod{m_{P}}$}     \\
					&                  & $3(n^2-1)/2$     & \text{{\rm if} $n \not\equiv 0 \bmod{m_{P}}$} \\ \hline
					$IV^{*}$                      &   $3$   &     $4n^2/3$     & \text{{\rm if} $n \equiv 0 \bmod{m_{P}}$}     \\
					&         &         $4(n^2-1)/3$    & \text{{\rm if} $n \not\equiv 0 \bmod{m_{P}}$} \\ \hline
					$III$                         &   $2$   &     $n^2/2$      & \text{{\rm if} $n \equiv 0 \bmod{m_{P}}$}     \\
					&         &         $(n^2-1)/2$     & \text{{\rm if} $n \not\equiv 0 \bmod{m_{P}}$} \\ \hline
					$IV$                          &   $3$   &     $2n^2/3$     & \text{{\rm if} $n \equiv 0 \bmod{m_{P}}$}     \\
					&         &          $2(n^2-1)/3$     & \text{{\rm if} $n \not\equiv 0 \bmod{m_{P}}$} \\ \hline
					$I_{m}^{*}$       &   $2$   &    $n^2$      & \text{{\rm if} $n \equiv 0 \bmod{m_{P}}$}     \\
					&         &            $n^2-1$       & \text{{\rm if} $n \not\equiv 0 \bmod{m_{P}}$} \\ \hline
					$I_{m}^{*}$
					&   $4$   & $n^2(m+4)/4$        & \text{{\rm if} $n \equiv 0 \bmod{m_{P}}$}   \\
					
					&         &            $(n^2-1)(m+4)/4$ & \text{{\rm if} $n \equiv 1,3 \bmod{m_{P}}$} \\
					&         &            $(n^2(m+4)/4)-1$       & \text{{\rm if} $n \equiv 2 \bmod{m_{P}}$}   \\  \hline
					$I_{m}$                       & $\dfrac{m}{\gcd \left( a_{P}, m \right)}$
					& $n^2\dfrac{a_{P}\left( m-a_{P} \right)}{m}-\dfrac{n'\left( m-n' \right)}{m}$
					& $n':= a_P n (\textrm{mod }m)$ \\ 
			\end{tabular}}    
		\end{center}
		\caption{Tables of the common valuation $k_{v,n}$ for standard Kodaira types.}\label{tab:common_vals}
	\end{table}

	\begin{table}[!htb]
		\begin{center}
			\scalebox{0.9}{\begin{tabular}{c|c|cl}
					Kodaira symbol       & $m_{P}$ & $k_{v,n}=n^2c_v(P)-c_v(nP)$ & \\ 
					\hline
					$X_2$                     &   2   &     $n^2$       & \text{{\rm if} $n \equiv 0 \bmod{m_{P}}$}     \\
					&                  & $n^2-1$     & \text{{\rm if} $n \not\equiv 0 \bmod{m_{P}}$} \\ \hline
					$K_{2m}$                     &   2   &     $\frac{mn^2}{2}$       & \text{{\rm if} $n \equiv 0 \bmod{m_{P}}$}     \\
					&                  & $\frac{m(n^2-1)}{2}$     & \text{{\rm if} $n \not\equiv 0 \bmod{m_{P}}$} \\ \hline
					$T_{m}$                     &   2   &     $n^2$       & \text{{\rm if} $n \equiv 0 \bmod{m_{P}}$}     \\
					&                  & $n^2-1$     & \text{{\rm if} $n \not\equiv 0 \bmod{m_{P}}$} \\ \hline
			\end{tabular}}    
		\end{center}
		\caption{Tables of the common valuation $k_{v,n}$ for non-standard Kodaira types.}\label{tab:common_vals_non_std}
	\end{table}
	
	If the residue field $\kappa$ is perfect, then the Kodaira symbol of the curve is one of those of Table \ref{tab:common_vals} and it can be computed using \cite[Table C.15.1]{aoc}. If $\kappa$ is not perfect, one can compute the Kodaira symbol of the curve using \cite{Szydlo_PhD,szydlo}. If the Kodaira symbol is not one of the symbols in Table \ref{tab:common_vals} and $P$ is singular modulo $v$, then the Kodaira symbol is one of those in Table \ref{tab:common_vals_non_std}. 
	
	We also emphasize that the formula for the cancellation $k_{v,n}$ is a quadratic polynomial in $n$ with rational coefficients which depend only on the correction terms $c_v(P)$. When $K$ is a function field of a curve over an algebraically closed field, the quantity $c_v(P)$ was defined by other means in Cox-Zucker \cite{coxzucker} and used extensively by \cite{shioda} in his theory of Mordell-Weil lattices. 
	There are two key facts that make the proof ultimately very short.
	
	\begin{itemize}
		\item[1.] The N\'{e}ron local height is a sum of two ingredients: local intersection pairing with the zero section and the ``correction part'' which depends only on the component which the given point hits at the place $v$, cf. \cite{Lang_dioph_anal}, \cite{J_S_Muller},\cite{Busch_Muller} \cite{Holmes_PhD}.
		\item[2.] The valuation $v(\psi_n(P))$ is a quadratic polynomial in $n$ whose coefficients depend on the local N\'{e}ron height at arguments $nP$ and $P$, respectively, and on the valuation of the minimal discriminant $\Delta$ of the elliptic curve $E$, cf. Lemmas \ref{lem:valuation_lem_1},\ref{lemma:psiinter}. In the case of finite extensions of $\Q_p$, this was already proven in \cite{Stange}.
	\end{itemize}
	
	On the way we reprove in our setting some results of \cite{ayad} and obtain a much simpler analysis of the ''troublemaker sequences'' used by Stange \cite{Stange} and \cite{yv21} in their proofs. 
	In the number field case, the sequence $k_{v,n}$ received considerable attention during the last 30 years, with \cite{ayad}, \cite{chhadiv}, \cite{ingram} attempting to
	understand the shape of $k_{v,n}$. In this case, \cite{yv21} established the precise value
	of $k_{v,n}$. The goal of this paper is to extend their result to the most general setting. One can easily check that our Table \ref{tab:common_vals} agrees with \cite[Table 1.1]{yv21}.
	
	The study of the sequence $k_{v,n}$ has some interesting applications. For example, knowing the behaviour of $k_{v,n}$ was necessary to show that every elliptic divisibility sequence satisfies a recurrence relation in the rational case, cf. \cite{recurrence}. In Section \ref{sec:exam} we will show how Theorem \ref{main} helps to generalize that result. Another application can be found in \cite{Stange} and \cite{ingram}, where the study of $k_{v,n}$ was necessary to study the problem of understanding when a multiple of $P$ is integral.
	
	\section*{Acknowledgements}
	We want to thank Stefan Barańczuk, Matija Kazalicki, Joseph Silverman, and Paul Voutier for the comments on the earlier version of this paper. 
	We thank the anonymous referee for their careful reading of our manuscript and their many insightful comments and suggestions.
	
	The first author acknowledges the support by Dioscuri program initiated by the Max Planck Society, jointly managed with the National Science Centre (Poland), and mutually funded by the Polish Ministry of Science and Higher Education and the German Federal Ministry of Education and Research.
	The second author has been supported by MIUR (Italy) through PRIN 2017 ``Geometric, algebraic and analytic methods in arithmetic'' and has received funding from the European Union’s Horizon 2020 research and 
	innovation program under the Marie Skłodowska-Curie Grant Agreement No. 101034413.
	
	\section{First definitions}\label{sec:firstdef}
	Let $R$ be a discrete valuation ring with quotient field $K$ and residue field $\kappa$.
	Let $v$ be the valuation of $K$ and assume that $v(K^*)=\Z$. Let $E$ be an elliptic curve over $K$ which is a generic fibre of the regular proper model $\pi:\mathcal{E}\rightarrow\Spec R=\{o,s\}$. The fibre $E_s=\pi^{-1}(s)$ is the special fibre (singular or not). We denote by $\widetilde{\mathcal{E}}\rightarrow\Spec R=\{o,s\}$ the associated N\'{e}ron model which is the subscheme of smooth points over $R$. 
	Assume that Weierstrass model of $E$ is in minimal form with respect to $v$ and let $P=(x,y)\in E(K)$. We denote by $\Delta$ the discriminant of the minimal model $E$.
	
	The point $P\in E(K)$ extends to a section $\sigma_{P}:\Spec R\rightarrow\overline{\mathcal{E}}$ by the N\'{e}ron model property. Let $\Phi_{v}$ denote the group of components of the special fibre $E_s$. We have a natural homomorphism $comp_{v}:E(K)\rightarrow\Phi_{v}$ which sends the point $P$ to the element of the component group. We say that the point $P$ is non-singular modulo $v$ if the image $comp_v(P)$ is the identity element, otherwise the point $P$ is singular modulo $v$. 
	
	Let $E_0(K)\subset E(K)$ be the kernel of $comp_{v}$, i.e. the group of points that reduce to non-singular points modulo $v$. Every element $comp_{v}(P)$ belongs to a cyclic component of $\Phi_v$. Let $a_P$ be the order of this component and let $m_P$ be the order of $comp_v(P)$ in $\Phi_v$. 
	
	In the proof of Theorem \ref{main}, without loss of generality, $K$ may be replaced by its completion. So, we will assume that $K$ is complete (with respect to $v$).
	
	\begin{definition}
		Let $(P.O)_v$ denote the local intersection number of the point $P\in E(K)\setminus\{O\}$ with the zero point $O$ at $v$ defined by the formula
		\[(P.O)_v=\max\{0,-v(x(P))\}\]
		for the minimal model $E$. In particular, $(P.O)_v>0$ if and only if $P$ reduces to the identity modulo $v$ and then $P$ is non-singular. For more details on local intersection, see \cite[Section IV.7]{Silverman_Advanced}.
	\end{definition}
	\begin{remark}
		For the behaviour of the sequence of integers $\{(nP.O)_v\}_n$, see \cite[Section 7 and Lemma 8.2]{Naskrecki_NYJM} in the function field case and \cite[Lemma 5.1]{Stange} in the number field case.
	\end{remark}
	\begin{definition}\label{def:cv}
		
		With a pair $(P,v)$ we associate a rational number $c_v(P)$ which is called \textit{the correction term}.  Let  $\widehat{\lambda_v}(P)$ be the local N\'{e}ron height as defined in \cite[Theorem III.4.1]{Lang_dioph_anal}. Define
		\begin{equation}\label{eq:cvP}
			c_v(P)=(P.O)_v+v(\Delta)/6-2\widehat{\lambda_v}(P).
		\end{equation}
	\end{definition}
	
	\begin{remark}
		Observe that $c_v(P)$ depends only on $comp_v(P)$. This follows from \cite[Theorem 11.5.1]{Lang_Fundamentals} when $P$ is singular modulo $v$ and from \cite[Theorem VI.4.1]{Silverman_Advanced} when $P$ is non-singular modulo $v$. In particular, \cite[Theorem VI.4.1]{Silverman_Advanced} shows that $c_v(P)=0$ when $P$ is non-singular modulo $v$.
		In Lemma \ref{tab:corr_terms} we will show how to explicitly compute $c_v(P)$.
		Finally, note that the definition of local N\'{e}ron height in \cite[Theorem III.4.1]{Lang_dioph_anal} is given for discrete valuation fields, as is noted by Lang in \cite[p. 66]{Lang_dioph_anal}.
	\end{remark}
	
	The division polynomials are defined in \cite[Exercise 3.7]{aoc}. We recall the properties that we will use:
	\begin{itemize}
		\item[(A)] If $nP$ is not equal to the identity $O$ of the curve, then \[x(nP)=\frac{\phi_n(P)}{\psi_n^2(P)};\]
		\item[(B)] The polynomials have integer coefficients and depends only on $x(P)$. Seen as polynomials in $x(P)$, $\psi_n^2(x)$ has degree $n^2-1$ and $\phi_n(x)$ is monic and has degree $n^2$;
		\item[(C)] For all $n\geq 1$, \[\phi_n(P)=x(P)\psi_n^2(P)-\psi_{n-1}(P)\psi_{n+1}(P).\]
		\item[(D)]For all $n\geq m\geq r$, \[
		\psi_{n+m}(P)\psi_{n-m}(P)\psi_r^2(P)=\psi_{n+r}(P)\psi_{n-r}(P)\psi_m^2(P)-\psi_{m+r}(P)\psi_{m-r}(P)\psi_n^2.
		\]
	\end{itemize}
	
	\begin{remark}
		Our proof clearly works also for number fields. In that setting the result was known \cite{yv21}, but the two proofs are completely different. Indeed, the proof of Yabuta and Voutier is almost completely arithmetical, while ours is more geometrical.
		
	\end{remark}
	\begin{remark}
		We will work assuming that $v(0)=\infty$. Under this assumption, our theorem works also in the case when $nP=O$. In this case $\psi_n^2(P)=0$ and then $k_{v,n}(P)=v(\phi_n(P))$.
	\end{remark}
	
	\section{Proof of the main theorem}
	We start this section by recalling some classical facts on the local N\'{e}ron height.
	\begin{lemma}\label{lemma:properties}
		The following hold:
		\begin{itemize}
			\item For all $R,Q\in E(K)$ with $R,Q,R\pm Q\neq O$,
			\begin{equation}\label{eq:lambda}
				\widehat{\lambda_v}(R+Q)+\widehat{\lambda_v}(R-Q)=2\widehat{\lambda_v}(R)+2\widehat{\lambda_v}(Q)+v(x(R)-x(Q))-v(\Delta)/6.
			\end{equation}
			\item The N\'{e}ron local height does not change if we replace $K$ with a finite extension. Moreover, $\widehat{\lambda_v}(\cdot)$ does not depend on the choice of the minimal Weierstrass model defining the curve.
			\item If $comp_v(P)=comp_v(P')$, then $\widehat{\lambda_v}(P)=\widehat{\lambda_v}(P')$.
			\item If $P\notin E_0(K)$, then $\widehat{\lambda_v}(P)$ just depends on the image of $P$ in $E(K)/E_0(K)$.
		\end{itemize}
	\end{lemma}
	\begin{proof}
		For the first two statements, see \cite[Theorem III.4.1 and Page 63]{Lang_dioph_anal}. For the last two, see \cite[Theorem 11.5.1]{Lang_Fundamentals}.
	\end{proof}
	\begin{lemma}\label{lemma:advsilv}
		If $P$ has non-singular reduction, then \[\widehat{\lambda_v}(P)=v(\Delta)/12+(P.O)_v/2.\]
	\end{lemma}
	\begin{proof}
		See \cite[Theorem VI.4.1]{Silverman_Advanced} for the case of perfect fields and \cite[Chapter III, \S 4, \S 5]{Lang_dioph_anal} for the general case. Observe that $\widehat{\lambda_v}(P)$ is denote by $\lambda_v(P)$ in both \cite[Chapter III, \S 4, \S 5]{Lang_dioph_anal} and \cite{Silverman_Advanced}.
	\end{proof}
	\begin{lemma}\label{lem:valuation_lem_1}
		If $nP\neq O$, then 
		\begin{equation}\label{eq:psi_lambda_formula}
			\widehat{\lambda_v}(nP) = n^2 \widehat{\lambda_v}(P)+v(\psi_{n}(P))-\frac{n^2-1}{12}v(\Delta),
		\end{equation}
		where $\Delta$ is the discriminant of the curve.
	\end{lemma}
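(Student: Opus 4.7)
The plan is to prove the formula by induction on $n$, using the quasi-parallelogram identity for the local Néron height together with property (C) of the division polynomials. The base cases are easy: for $n=1$ both sides equal $\widehat{\lambda}(P)$ since $\psi_1 = 1$, and for $n=2$ the claim reduces to the standard duplication formula $\widehat{\lambda}(2P) = 4\widehat{\lambda}(P) + v(\psi_2(P)) - \tfrac{1}{4}v(\Delta)$, which is classical in view of the expression $\psi_2(x,y) = 2y + a_1 x + a_3$ (see e.g.\ Silverman, \emph{Advanced Topics}, Chapter~VI).

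For the inductive step, assume the formula holds at indices $n-1$ and $n$ with $n \ge 2$. The parallelogram law applied to the pair $(nP, P)$ reads
\[
\widehat{\lambda}((n+1)P) + \widehat{\lambda}((n-1)P) = 2\widehat{\lambda}(nP) + 2\widehat{\lambda}(P) + v\bigl(x(nP) - x(P)\bigr) - \tfrac{1}{6}v(\Delta).
\]
Property (C) yields $\phi_n(x(P)) - x(P)\psi_n^2(x(P)) = -\psi_{n-1}(P)\psi_{n+1}(P)$, whence
\[
x(nP) - x(P) = -\frac{\psi_{n-1}(P)\psi_{n+1}(P)}{\psi_n(P)^2}, \qquad v\bigl(x(nP) - x(P)\bigr) = v(\psi_{n-1}(P)) + v(\psi_{n+1}(P)) - 2\,v(\psi_n(P)).
\]
Solving for $\widehat{\lambda}((n+1)P)$ and substituting the inductive hypotheses, the coefficient of $\widehat{\lambda}(P)$ becomes $-(n-1)^2 + 2n^2 + 2 = (n+1)^2$, the $v(\psi_k(P))$ terms telescope down to a single $v(\psi_{n+1}(P))$, and the $v(\Delta)$ contributions collapse to $-\bigl((n+1)^2 - 1\bigr)/12$, giving the formula at $n+1$.

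The only genuine difficulty is ensuring the parallelogram identity actually applies, since it requires $(n-1)P, nP, (n+1)P \ne O$. If $P$ is torsion and some intermediate multiple vanishes, one invokes the convention $v(0) = \infty$ adopted in the paper and checks that both sides of the target formula become infinite exactly when $nP = O$ (equivalently, $\psi_n(P) = 0$); this is the subtle bookkeeping step. An equivalent, more compact formulation avoids the induction altogether: the argument above shows that $F(n) := \widehat{\lambda}(nP) - v(\psi_n(P))$ has constant second difference $2\widehat{\lambda}(P) - \tfrac{1}{6}v(\Delta)$, so $F$ is a quadratic in $n$ with leading coefficient $\widehat{\lambda}(P) - \tfrac{1}{12}v(\Delta)$; the remaining two coefficients are then pinned down by $F(1) = \widehat{\lambda}(P)$ and the duplication formula $F(2) = 4\widehat{\lambda}(P) - \tfrac{1}{4}v(\Delta)$, forcing $F(n) = n^2\widehat{\lambda}(P) - (n^2-1)v(\Delta)/12$ as desired.
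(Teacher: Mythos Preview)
Your inductive argument for the non-torsion case is essentially identical to the paper's: same base cases, same application of the quasi-parallelogram law with $R=nP$, $Q=P$, same use of property~(C) to rewrite $v(x(nP)-x(P))$, and the same telescoping (the paper packages it via $d_n=\widehat{\lambda}(nP)-v(\psi_n(P))$ rather than your second-difference language, but the content is the same).

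The gap is in the torsion case. You correctly flag that the parallelogram identity requires $(n-1)P,nP,(n+1)P\neq O$, but your proposed fix---``invoke $v(0)=\infty$ and note both sides of the target formula are infinite exactly when $nP=O$''---does not actually repair the induction. The lemma only claims something for $nP\neq O$; the problem is that to reach such an $n$ inductively you may have to pass through an index $k<n$ with $kP=O$, at which point $\widehat{\lambda}(kP)$ is simply undefined (it is not a finite quantity you can manipulate), and the parallelogram law is not available. Saying ``both sides are infinite'' at that index does not let you extract a finite value for $\widehat{\lambda}((k+1)P)$ from the identity, nor does it verify the formula at indices $n>k$ with $nP\neq O$. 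Your second-difference reformulation has exactly the same defect: it presupposes the recurrence holds at every integer.

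The paper resolves this with a genuinely different device: pass to the completion, choose a non-torsion point $P'$ in a finite extension with $(P-P'.O)_v$ large, observe that $\widehat{\lambda}_v$, $comp_v$, $(kP.O)_v$ and $v(\psi_n(\cdot))$ all agree on $P$ and $P'$ for the relevant indices (using continuity of $\psi_n$, discreteness of $v$, and that $\widehat{\lambda}_v$ depends only on $comp_v$ off the identity component), and then transport the already-proved non-torsion formula from $P'$ back to $P$. You should supply an argument of this kind rather than the $v(0)=\infty$ bookkeeping.
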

	\begin{proof}
		This lemma is stated as an exercise in \cite[Exercise 6.4 (e)]{Silverman_Advanced}. First, assume that $P$ is a non-torsion point.
		We prove the lemma by induction. If $n=1$, it is obvious since $\psi_1(P)=1$. If $n=2$, then it follows from \cite[Theorem VI.1.1]{Silverman_Advanced}. Note that, in \cite{Silverman_Advanced}, it is assumed that $\kappa$ is perfect, but the proof of the theorem works in the exact same way without that requirement, cf. \cite[Chapter III, \S 4, \S 5]{Lang_dioph_anal}. So, we assume that the lemma is true for $i\leq n$ and we show that it holds also for $n+1\geq 3$. Put $R=nP$ and $Q=P$. So, $R,Q,R\pm Q\neq O$. Then, by Lemma \ref{lemma:properties},
		\begin{equation}\label{eq:n+1}
			\widehat{\lambda_v}((n+1)P)=-\widehat{\lambda_v}((n-1)P)+2\widehat{\lambda_v}(nP)+2\widehat{\lambda_v}(P)+v(x(nP)-x(P))-\frac{v(\Delta)}6.
		\end{equation}
		For the definition of division polynomials we have
		\begin{align}\label{eq:lam}
			x(nP)-x(P)&=\frac{\phi_n(P)}{\psi_n^2(P)}-x(P)\\&=x(P)+\frac{\psi_{n+1}(P)\psi_{n-1}(P)}{\psi_n^2(P)}-x(P)\nonumber \\&=\frac{\psi_{n+1}(P)\psi_{n-1}(P)}{\psi_n^2(P)}.\nonumber
		\end{align}
		Put $d_n=\widehat{\lambda_v}(nP)-v(\psi_{n}(P))$ and then combining (\ref{eq:n+1}) and (\ref{eq:lam}) we have
		\[
		d_{n+1}=2d_n-d_{n-1}+2d_1-\frac{v(\Delta)}6.
		\]
		By induction, we have $d_i=i^2d_1-(i^2-1)v(\Delta)/12$ for $i\leq n$ and then
		\begin{align*}
			d_{n+1}&=(2n^2-(n-1)^2+2)d_1-v(\Delta)\left(\frac{2(n^2-1)-((n-1)^2-1)+2}{12}\right)\\&=(n+1)^2d_1-v(\Delta)\left(\frac{(n+1)^2-1}{12}\right).
		\end{align*}
		Therefore,
		\begin{align*}
			\widehat{\lambda_v}((n+1)P)&=v(\psi_{n+1}(P))+d_{n+1}\\&=v(\psi_{n+1}(P))+(n+1)^2\widehat{\lambda_v}(P)-v(\Delta)\left(\frac{(n+1)^2-1}{12}\right).
		\end{align*}
		
		Now, we need to prove the lemma in the case when $P$ is a torsion point. Fix $n\in \mathbb{N}$. First, we can assume that $K$ is complete since clearly the statement does not change. In a finite extension of $K$ we can find a non-torsion point $P'$ such that $(P-P'.O)_v$ is very large. The choice of $P'$ will depend on $n$. Thanks to Lemma \ref{lemma:properties}, the N\'{e}ron local height does not change if we replace $K$ with a finite extension, so we can assume $P'\in E(K)$. Since $(P-P'.O)_v$ is very large, $(nP.O)_v=(nP'.O)_v$ and $(P.O)_v=(P'.O)_v$ (this is certainly true if $(P-P'.O)_v>(nP.O)_v$). Moreover, $comp_v(P)=comp_v(P')$ and then, from Lemma \ref{lemma:properties}, $\widehat{\lambda_v}(P)=\widehat{\lambda_v}(P')$ and $\widehat{\lambda_v}(nP)=\widehat{\lambda_v}(nP')$. Furthermore, $v(\psi_n(P))=v(\psi_n(P'))$ since $\psi_n$ is a continuous function and $v$ is discrete. We know that the lemma holds for $P'$ since it is a non-torsion point and then
		\begin{align*}
			\widehat{\lambda_v}(nP)=\widehat{\lambda_v}(nP') &= n^2 \widehat{\lambda_v}(P')+v(\psi_{n}(P'))-\frac{n^2-1}{12}v(\Delta)\\&=n^2 \widehat{\lambda_v}(P)+v(\psi_{n}(P))-\frac{n^2-1}{12}v(\Delta).
		\end{align*}
	\end{proof}
	
	Recall that, in the case $\kappa$ perfect, the Kodaira symbol of an elliptic curve can be computed using \cite[Table C.15.1]{aoc}. The analogous reference for the non-standard Kodaira symbols is \cite{Szydlo_PhD,szydlo}. Recall that $a_P$ and $m_P$ are defined at the beginning of Section \ref{sec:firstdef}.
	\begin{lemma}\label{tab:corr_terms}
		The value of $c_v(P)$ is provided as a function of the Kodaira symbol and $m_P$ in the following table.
		\begin{center}
			\begin{tabular}{c|c|c}
				Kodaira symbol       & $m_{P}$& $c_v(P)$ \\ \hline
				Any                   &   $1$   &     $0$       \\ \hline
				$III$                         &   $2$   &     $1/2$          \\
				\hline
				
				$III^{*}$                     &   $2$   &     $3/2$        \\
				\hline
				$I_{m}^{*}$       &   $2$   &    $1$       \\
				\hline
				$X_2$ & $2$ & $1$\\
				\hline
				$K_{2m}$ & $2$ & $m/2$\\
				\hline
				$T_{m}$ & $2$ & $1$\\
				\hline
				$IV^{*}$                      &   $3$   &     $4/3$         \\
				\hline
				$IV$                          &   $3$   &     $2/3$      \\
				\hline
				$I_{m}^{*}$
				&   $4$   & $(m+4)/4$            \\
				\hline
				$I_{m}$                       & $\dfrac{m}{\gcd \left( a_{P}, m \right)}$
				& $\dfrac{a_{P}\left( m-a_{P} \right)}{m}$\\ 
			\end{tabular}
		\end{center}
		In the last line, we are assuming that $0<a_P< m$.
	\end{lemma}
	\begin{proof}
		Recall that, by Definition \ref{def:cv},
		\[c_v(P)=(P.O)_v+v(\Delta)/6-2\widehat{\lambda_v}(P).\]
		Assume that $P$ has non-singular reduction. By Lemma \ref{lemma:advsilv}, we have $\widehat{\lambda_v}(P)=v(\Delta)/12+(P.O)_v/2$. Hence, $c_v(P)=0$. 
		
		If $P$ has singular reduction, then $(P.O)_v=0$ since the identity is a non-singular point. 
		The calculation of the values of $c_v(P)$ is based on the information obtained from the resolution of singularities performed on Weierstrass minimal model. By Lemma \ref{lemma:properties}, $\widehat{\lambda_v}(P)$ does not depend on the choice of the minimal Weierstrass model defining the curve. So, $c_v(P)$ does not depend on the choice of the minimal Weierstrass model. 
		
		If the residue field $\kappa$ is perfect, Tate algorithm \cite{Tate_algorithm} proves the existence of the flat proper regular model of an elliptic curve. 
		The procedure of Tate was extended in the work of Michael Szydlo \cite{Szydlo_PhD,szydlo} for any elliptic curve over DVR, in particular to the case when $\kappa$ is not a perfect field. We will refer to this general procedure as GTA (Generalized Tate's algorithm).
		
		When $\charac\kappa\neq 2,3$ the values of $c_v(P)$ can be determined from the data obtained by the procedure of Tate. In fact, one follows the calculations in \cite[Proposition 6]{cremonaprickett}
		and notes that the valuation of the minimal model and the valuation of the $(x,y)$ coordinates of the point which reduces to the singularity are uniform across all fields with $\charac\kappa\neq 2,3$, cf. \cite[\S 4.1]{szydlo} and in particular \cite[Table 1]{szydlo}. 
		
		We will explain now in detail the computations for residue fields of characteristic $2$ and $3$, when new reduction types arise and some care must be taken. We essentially follow the strategy laid out above and supplement it with the data about valuations of the coefficients provided in \cite{Szydlo_PhD,szydlo}. When the fiber at $v$ has reduction type $I_0$, $II$, $II^*$ or any of the following non-standard Kodaira types: $Z_1, Z_2, X_1, Y_1, Y_2,Y_3, K_{2n}', K_{2n+1}$, the group of components is trivial (see  \cite[Equation (20)]{szydlo}). 
		
		\textbf{Case $m_{P}=1$.} In such a case $P$ has non-singular reduction and then, as we pointed out at the beginning of the proof, $c_{v}(P) = 0$.
		
		\textbf{Case $m_{P} = 2$.} Since $comp_{v}$ is a group homomorphism and $\Phi_{v}\cong \mathbb{Z}/2\mathbb{Z}$ it follows that if $P\not\in E_{0}(K_{v})$, then $c_{v}(2P) = 0$ and $c_{v}(P) = c_{v}(3P)$. Moreover, $(P.O)_{v} = (3P.O)_{v} = 0$, hence $\widehat{\lambda_v}(P) = \widehat{\lambda_v}(3P)$. We get from Lemma \ref{lem:valuation_lem_1} for $n=3$ that
		\[8\widehat{\lambda_v}(P) = 2v(\Delta)/3 - v(\psi_{3}(P)).\]
		From the definition of $c_{v}(P)$ (see Equation \eqref{def:cv}) we get that
		\[c_{v}(P) = v(\psi_{3}(P))/4.\]
		For notational convenience, put $x=x(P)$.
		Since $\psi_{3}(P) = 3x^4+b_2 x^3+3b_4x^2+3b_6x+b_8$ it remains to calculate the valuation estimates for the suitable quantities. For their definition, see \cite[Section III.1]{aoc}. By \cite[Equation (20)]{szydlo}, the only non-standard Kodaira types that we have to consider are $X_2$, $K_{2m}$, and $T_m$. 
		
		Types $III, III^*, I_{m}^*$. We follow the argument in \cite[Proposition 6]{cremonaprickett} combined with the valuation tables in \cite{szydlo} for the non-perfect field cases. Notice that, for these types, Tate's algorithm works in the exact same way in the perfect and non-perfect field case (see in particular \cite[Tables 1, 4, 5]{szydlo}).
		
		Type $X_2$. It follows from \cite[p.41]{Szydlo_PhD} that we can construct a minimal model where $v(a_1)\geq 1$, $v(a_2)\geq 2$, $v(a_3)\geq 2$, $v(a_4)=2$, $v(a_6)\geq 4$ and the sequence of blowups at the singular points resolve the model in such a way that the point $P=(x,y)\not\in E_0(K_v)$ has $v(x)\geq 2$, cf.\cite[Table 5]{szydlo}. It follows that $v(b_2)\geq 2$, $v(b_4)\geq 3$, $v(b_6)\geq 4$ and finally $v(b_8) = 4$. Thus, we have $v(\psi_3(P))=4$ and then $c_v(P) = 1$.
		
		Type $K_{2m}$. It follows from \cite[Table 6]{szydlo} that $v(b_2)\geq 2$, $v(b_4)\geq 2$, $v(b_6)\geq 2m$, and $v(b_8) = 2m$. The resolution procedure in \cite[\S 6.12]{Szydlo_PhD} implies that for the minimal model with the previous assumptions we have $v(x)>m$. Hence, we obtain $v(\psi_{3}(P)) = 2m$, proving that $c_{v}(P) = m/2$.
		
		Type $T_{m}$. In the resolution \cite[pp. 48-51]{Szydlo_PhD} the valuation of $x$ is equal to $1$, cf. \cite[p. 51]{Szydlo_PhD}. It follows from \cite[Table 7]{szydlo} that all terms of $\psi_{3}(P)$ but $3x^4$ have valuation at least $5$. Hence $v(\psi_{3}(P)) = 4$ and thus $c_v(P) = 1$.
		
		\textbf{Case $m_{P}=3$.} Type $IV, IV^*$.
		First of all, observe that $\widehat{\lambda_v}(P)=\widehat{\lambda_v}(-P)$. Indeed, if we put $\widehat{\lambda_v'}(P)\coloneqq\widehat{\lambda_v}(-P)$, then $\widehat{\lambda_v'}$ satisfies (\ref{eq:lambda}). Thanks to \cite[Theorem III.4.1]{Lang_dioph_anal}, we have $\widehat{\lambda_v'}=\widehat{\lambda_v}$ since $\widehat{\lambda_v}$ is unique. So, $\widehat{\lambda_v}(2P)=\widehat{\lambda_v}(-P)=\widehat{\lambda_v}(P)$.
		Using Lemma \ref{lem:valuation_lem_1} with $n=2$, we have
		\[
		3\widehat{\lambda_v}(P)=\frac{v(\Delta)}{4}-v(\psi_2(P))
		\]
		and, by \eqref{eq:cvP},
		\[
		c_v(P)=(P.O)_v+\frac{v(\Delta)}6-2\widehat{\lambda_v}(P)=\frac{v(\psi_2^2(P))}{3}.
		\]
		From the definition of $\psi_2^2(P)$ we have $\psi_2^2(P)=4x(P)^3+b_2x(P)^2+2b_4x(P)+b_6$. Since $\widehat{\lambda_v}(P)$ does not depend on the choice of the minimal Weierstrass equation defining the curve, we can assume that the coefficients satisfy GTA. In the case for the type $IV$, we have $v(b_2)>0$, $v(b_4)>1$, and $v(b_6)=2$. Moreover, $v(x(P))>0$  holds due to the resolution construction. 
		Hence,
		\[
		2=v(b_6)=v(4x(P)^3+b_2x(P)^2+2b_4x(P)+b_6)=v(\psi_2^2(P))=3c_v(P)
		\]
		and we are done. A very analogous calculation reveals that for the type $IV^*$ we have $c_{v}(P)=4/3$.
		
		\textbf{Case $m_{P}>3$.}  We obtain for every residue field, even non-perfect the values of $c_v(P)$ are subject to more complicated counting rule. We basically follow the argument described in \cite{cremonaprickett},  enhanced by the classification of fibers by Szydlo \cite{szydlo}.
		
		Type $I_m^*$ $(m_P=4)$. Following \cite[p.353]{silvermancomp} we note that if $P\not\in E_{0}(K_v)$, then $comp_v(P)=comp_v(-3P)=comp_v(3P)$ since $comp_v$ is a homomorphism and $\widehat{\lambda}_{v}$ is unique, cf. case $m_P=3$. Therefore, we can apply again Lemma \ref{lem:valuation_lem_1} with $n=3$ to conclude that $c_{v}(P) = v(\psi_{3}(P))/4$. The explicit formulas follow from the inspections of Tables in \cite{szydlo}.
		
		Type $I_m$. The correction formula for the multiplicative type can be explained in several ways. The down-to-earth approach is to invoke the theory of the Tate curve \cite[III \S 5]{Lang_dioph_anal} which is independent of the residue field. We extract $c_v(P) = k(m-k)/m$ for the intersection of $P$ with the $k$-th cyclic component from \cite[Thm. 3.5.1]{Lang_dioph_anal}, cf. \cite[Thm. 6.4.2(b)]{Silverman_Advanced} and \cite[Propositions 5,6]{cremonaprickett}. 
	\end{proof}
	\begin{remark}
		A more conceptual approach to the proof of Lemma \ref{tab:corr_terms} follows from abstract intersection theory developed by N\'{e}ron, cf. \cite{J_S_Muller}. This is an analogue of the calculations one can do in the theory of elliptic surfaces, cf. \cite[III \S 8]{Silverman_Advanced}. This is a very general approach that would in principle allow us to calculate all correction terms using just the suitable fibral divisor. In fact, none of the references treats properly the case of non-perfect fields so we refrain from giving here all the fine details of this argument.     
	\end{remark}
	\begin{lemma}\label{lemma:psiinter}
		Let $P$ be a point on the elliptic curve $E$ as above. For every $n$ such that $nP\neq O$, we have
		\[v(\psi_n^2(P)) = (nP.O)_v-n^2(P.O)_v+n^2 c_v(P)-c_v(nP).\]
		In particular, $n^2 c_v(P)-c_v(nP)$ is an integer.
	\end{lemma}
	\begin{proof}
		
		From (\ref{eq:psi_lambda_formula}) we have
		\[
		v(\psi_{n}(P))=\widehat{\lambda_v}(nP) +\frac{n^2-1}{12}v(\Delta)- n^2 \widehat{\lambda_v}(P).
		\]
		From Definition \ref{def:cv} we have
		\[
		\widehat{\lambda_v}(P)=\frac{v(\Delta)}{12}-\frac{c_v(P)}{2}+\frac{(P.O)_v}2.
		\]
		Hence,
		\begin{align*}
			v(\psi_{n}(P))&=\widehat{\lambda_v}(nP) +\frac{n^2-1}{12}v(\Delta)- n^2 \widehat{\lambda_v}(P)\\&
			=\frac{v(\Delta)}{12}-\frac{c_v(nP)}2+\frac{(nP.O)_v}2 +\frac{n^2-1}{12}v(\Delta) \\&-n^2\frac{v(\Delta)}{12}+n^2\frac{c_v(P)}2-\frac{n^2(P.O)_v}2\\&
			=\frac{(nP.O)_v}2-\frac{n^2(P.O)_v}2+\frac{1}{2}n^2 c_v(P)-\frac{1}{2}c_v(nP).
		\end{align*}
	\end{proof}
	
	\begin{remark}
		Thanks to the previous lemma, one can easily show that the valuation $v(\psi_{n}(P))$ is independent of the choice of the minimal Weierstrass model.
	\end{remark}
	
	Put $n_P$ as the smallest positive integer such that $n_P P$ reduces to the identity modulo $v$. Observe that $n_P$ does not necessarily exist.
	\begin{lemma}\label{lemma:div}
		Let $k$ be an integer. If $(kP.O)_v>0$, then $(mkP.O)_v\geq (kP.O)_v$ for all $m\geq 1$. Moreover, if $n_P$ exists, then $(kP.O)_v>0$ if and only if $n_P\mid k$.
	\end{lemma}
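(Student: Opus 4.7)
The plan is to translate the inequality into the language of the formal group of $E$. The starting observation is that $(kP.O)_v>0$ is equivalent to $v(x(kP))<0$, which in turn says that $kP$ lies in the kernel of reduction modulo $v$. Since the Weierstrass equation is assumed to be minimal, I would identify this kernel, via the parameter $z=-x/y$, with the formal group $\hat{E}(\mathfrak{m})$ of $E$ over the maximal ideal $\mathfrak{m}\subset R$. From the standard local expansion near the origin one has $v(x(Q))=-2v(z(Q))$ whenever $v(z(Q))>0$, so that $(Q.O)_v=2v(z(Q))$ for every point $Q$ in the kernel of reduction.

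For the first assertion I would invoke the fact that multiplication by $m$ on the formal group is given by a power series $[m](T)=mT+O(T^2)\in R[[T]]$, which immediately yields $v([m](z))\geq v(z)$ for every $z\in\mathfrak{m}$. Setting $z=z(kP)$ and using $z(mkP)=[m](z(kP))$ gives $(mkP.O)_v=2v(z(mkP))\geq 2v(z(kP))=(kP.O)_v$, which is exactly the first statement.

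For the \emph{moreover} part, suppose $n_P$ exists, so that $n_P P$ reduces to the identity and hence $n_P P\in\hat{E}(\mathfrak{m})$. If $n_P\mid k$, then $kP$ is an integer multiple of $n_P P$, and the first assertion gives $(kP.O)_v\geq (n_P P.O)_v>0$. Conversely, if $(kP.O)_v>0$ then $kP$ lies in the subgroup $\hat{E}(\mathfrak{m})$; writing $k=qn_P+r$ with $0\leq r<n_P$, the element $rP=kP-q(n_P P)$ also lies in $\hat{E}(\mathfrak{m})$, and the minimality of $n_P$ forces $r=0$, i.e.\ $n_P\mid k$.

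The only point at which one should be even mildly careful is to confirm that the identification between the kernel of reduction and $\hat{E}(\mathfrak{m})$, together with the inequality $v([m](T))\geq v(T)$ on the formal group, both go through in the generality of the excerpt (arbitrary DVR $R$, possibly non-perfect residue field $\kappa$, possibly bad reduction of $E$). I expect this to pose no real obstacle, since both facts are deduced from the minimal Weierstrass equation alone and do not involve the Néron model or the structure of the special fibre.
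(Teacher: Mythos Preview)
Your argument is correct. Both parts go through: the formal-group inequality $v([m](z))\geq v(z)$ is purely a statement about power series with coefficients in $R$, and the identification of the kernel of reduction with $\hat{E}(\mathfrak{m})$ via $z=-x/y$ needs only an integral Weierstrass equation, so the generality of the setting (arbitrary DVR, non-perfect residue field, bad reduction) is indeed harmless. Your treatment of the \emph{moreover} clause is essentially identical to the paper's.

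Where you differ from the paper is in the first assertion. The paper stays entirely within the division-polynomial framework already in place: writing $P'=kP$ with $v(x(P'))<0$, it uses that $\phi_m$ is monic of degree $m^2$ and $\psi_m^2$ has integral coefficients and degree $m^2-1$ to get $v(\phi_m(x(P')))=m^2 v(x(P'))$ and $v(\psi_m^2(x(P')))\geq (m^2-1)v(x(P'))$, whence $v(x(mP'))\leq v(x(P'))$. This is a one-line computation requiring no extra machinery beyond the properties (A)--(C) already recorded. Your formal-group route is equally short and arguably more conceptual; it also makes the edge case $mkP=O$ transparent (since then $z(mkP)=0$ and the inequality is vacuous), and it would immediately yield finer information such as $(mkP.O)_v\geq (kP.O)_v+2v(m)$ if one ever needed it. The trade-off is that the paper's argument is self-contained relative to what has been set up, whereas yours imports the formal-group dictionary from outside.
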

	\begin{proof}
		Put $P'\coloneqq kP$ and by assumption we have $v(x(P'))<0$. Hence, we have $v(\phi_m(P'))=m^2v(x(P'))$ (recall that $\phi_m(P')$, seen as a polynomial in $x(P')$, is monic) and $v(\psi_m^2(P'))\geq (m^2-1)v(x(P'))$. So, 
		\[
		v(x(mkP))=v(\phi_m(P'))-v(\psi_m^2(P'))\leq v(x(P'))=v(x(kP))<0.
		\]
		
		Now, we prove the second part of the lemma.
		If $n_P\mid k$, then $(kP.O)_v>0$. If $n_P\nmid k$, then $k=qn_P+r$ with $0<r<n_P$. Recall that the identity is a non-singular point of the curve reduced modulo $v$. Hence, if $(kP.O)_v>0$, then $kP$ reduces to the identity modulo $v$ and then also the point $rP=kP-q(n_PP)$ reduces to the identity. This is absurd from the definition of $n_P$.
	\end{proof}
	\begin{proposition}
		Assume that $P$ has non-singular reduction and $(P.O)_v=0$. Then, $k_{v,n}(P)=0$ for all $n\geq 1$.
	\end{proposition}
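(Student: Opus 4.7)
The strategy is to reduce everything to Lemma \ref{lemma:psiinter} and exploit the fact that $E_0(K)$ is a subgroup of $E(K)$. Since $P$ has non-singular reduction, so does $nP$ for every $n$, which means $comp_v(nP)$ is trivial and hence $c_v(nP)=0$ for all $n$. Combined with the hypothesis $(P.O)_v=0$ and $c_v(P)=0$, Lemma \ref{lemma:psiinter} collapses (when $nP \neq O$) to the very clean identity
\[
v(\psi_n^2(P)) = (nP.O)_v = \max\{0,-v(x(nP))\}.
\]
In particular $v(\psi_n^2(P))\geq 0$, and one of $v(\psi_n^2(P))$ or $-v(x(nP))$ is zero.

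Next I would split into cases according to the reduction of $nP$. If $nP \neq O$ and $v(x(nP))\geq 0$, then $v(\psi_n^2(P))=0$, and from property (A), $v(\phi_n(P)) = v(\psi_n^2(P)) + v(x(nP)) = v(x(nP)) \geq 0$, so the minimum is $0$. If instead $nP \neq O$ but $v(x(nP)) < 0$, then $v(\psi_n^2(P)) = -v(x(nP)) > 0$, while $v(\phi_n(P)) = v(\psi_n^2(P)) + v(x(nP)) = 0$, so again the minimum is $0$.

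The remaining case is $nP = O$. Here $\psi_n^2(P)=0$ and property (C) gives $\phi_n(P) = -\psi_{n-1}(P)\psi_{n+1}(P)$, so it suffices to show that both $\psi_{n-1}(P)$ and $\psi_{n+1}(P)$ have valuation $0$. But since $P \neq O$ and $nP = O$ forces $n\geq 2$, I can apply the same identity $v(\psi_k^2(P)) = (kP.O)_v$ with $k = n-1$ and $k = n+1$. Using $(n-1)P = -P$ and $(n+1)P = P$, together with $x(-P)$ and $x(P)$ having the same valuation (which is $\geq 0$ by hypothesis), both intersection numbers vanish, giving $v(\psi_{n\pm 1}(P))=0$ and hence $v(\phi_n(P)) = 0$.

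I do not anticipate a serious obstacle: the main point is organizational, namely checking that the subgroup property of $E_0(K)$ lets the correction terms drop out uniformly in $n$, and that the torsion case $nP=O$ can be handled by applying the very same lemma to the neighbours $n\pm 1$, where the multiples revert to $\pm P$ and are covered by the hypothesis. The one place to be careful is to verify that $P\neq O$ is implicit in the statement (otherwise $(P.O)_v$ is not defined), so that $nP=O$ never occurs for $n=1$.
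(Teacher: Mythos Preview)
Your proof is correct and follows essentially the same approach as the paper: reduce to Lemma \ref{lemma:psiinter}, use that non-singular reduction kills the correction terms so $v(\psi_n^2(P))=(nP.O)_v$, and handle the $nP=O$ case via $(n\pm 1)P=\pm P$. The only cosmetic difference is that in the sub-case $(nP.O)_v>0$ you compute $v(\phi_n)$ directly from property~(A) as $v(\psi_n^2)+v(x(nP))=0$, whereas the paper goes through property~(C) and Lemma~\ref{lemma:div} to reach the same conclusion; your route is slightly shorter there.
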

	\begin{remark}
		This proposition was proved for number fields by Ayad \cite{ayad}. Their proof works also in our case, but it is very different from ours.
	\end{remark}
	\begin{proof} 
		Since $P$ has non-singular reduction, then $c_v(kP)=0$ for any $k$ (see Lemma \ref{tab:corr_terms}). Hence, from Lemma \ref{lemma:psiinter}, $v(\psi_k(P)) = (kP.O)_v$ for all $k$ such that $kP\neq O$.  
		
		Assume $nP\neq O$.
		If $(nP.O)_v>0$, then due to Lemma \ref{lemma:div} the numbers $((n-1)P.O)_v$ and $ ((n+1)P.O)_v$ are equal to zero. Therefore,
		$v(\psi_n(P)^2) =(nP.O)_v>0$, $v(\psi_{n-1}(P)\psi_{n+1}(P))=0$ and $v(\phi_n(P)) = 0$ from the properties (A), (B), (C), proving $k_{v,n}(P)=0$. 
		
		If $(nP.O)_v = 0$, then $v(\psi_n(P)^2)=0$ and $v(\phi_{n}(P))\geq 0$, hence again $k_{v,n}(P)=0$. Here we are using that the polynomials $\psi_n^2$ and $\phi_n$ have integer coefficients.
		
		Assume now that $nP=O$. 
		Then $\psi_n^2(P)=0$ and so $\phi_n(P)=\psi_{n-1}(P)\psi_{n+1}(P)$. 
		Observe that $((n\pm 1)P.O)_v=0$ since otherwise $(P.O)_v$ would be strictly positive. 
		Hence, $k_{v,n\pm1}(P)=v(\psi_{n\pm1}^2(P))$. 
		For the first part of the lemma, $k_{v,n\pm1}(P)=0$ and then
		\[
		v(\phi_n(P))=v(\psi_{n-1}(P)\psi_{n+1}(P))=\frac{k_{v,n-1}(P)+k_{v,n+1}(P)}2=0.
		\]
	\end{proof}
	Now, we are ready to prove our main result.
	\begin{proof}[Proof of Theorem \ref{main}]
		Assume that $P$ has non-singular reduction.
		If $v(x(P))< 0$, then, using that $\psi_n^2$ (resp. $\phi_n$) has integer coefficients and degree $n^2-1$ (resp. $n^2$), we have $v(\psi_n^2(P))\geq (n^2-1)v(x(P))$ and $v(\phi_n(P))=n^2v(x(P))$ (recall that $\phi_n$ is monic). So, $k_{v,n}(P)=n^2v(x(P))$. If $v(x(P))\geq 0$, then $(P.O)_v=0$ and we conclude thanks to the previous proposition.
		
		Assume now that $P$ has singular reduction, that $n_P$ exists, and that $n_P\mid n$. Then, $v(x(nP))<0$. Therefore, $v(\phi_n(P))<v(\psi_n^2(P))$ and so $k_{v,n}=v(\phi_n(P))$. Since $P$ is singular we have $v(x(P))\geq 0$. Indeed, if $v(x(P))< 0$ then $P$ is the identity modulo $v$ and the identity is a non-singular point. Therefore, $v(\phi_n(P))< v(x(P)\psi_n^2(P))$. Recall that $\phi_n(P)=x(P)\psi_n^2(P)+\psi_{n-1}(P)\psi_{n+1}(P)$. So, \[v(\phi_n(P))=v(\psi_{n+ 1}(P))+v(\psi_{n- 1}(P)).\] Observe that $(n\pm 1)P\neq O$ and then in particular $((n\pm1)P.O)_v=0$. Recall that $c_v(P)$ depends only on $comp_v(P)$ and observe that $c_v(P)=c_v(-P)$. Thus, $c_v((n\pm 1)P)=c_v(P)$ since $m_P\mid n_P\mid n$. Hence, we can apply Lemma \ref{lemma:psiinter} and we have \[v(\psi_{n\pm 1}(P)) =\frac{(n\pm 1)^2 c_v(P)-c_v((n\pm 1)P)}2= \frac{((n\pm 1)^2-1)c_v(P)}{2}.\] 
		Thus,
		\[k_{v,n}=v(\phi_n(P))=v(\psi_{n- 1}(P)\psi_{n+1}(P))= n^2 c_v(P)=n^2 c_v(P)-c_v(nP).\]
		Here we are using that $c_n(nP)=0$ since $nP$ is non-singular.
		
		It remains to prove the case when $P$ has singular reduction and that  $n_P$ does not exist or it exists but $n_P\nmid n$. In this case $v(x(nP))\geq 0$ and so $k_{v,n}=v(\psi_n^2(P))\geq 0$. Observe that $(P.O)_v=(nP.O)_v=0$ and then from Lemma \ref{lemma:psiinter},
		\[
		k_{v,n}=v(\psi_n^2(P))=n^2c_v(P)-c_v(nP).
		\]
		
		To conclude the proof, we want to show how to compute the values $n^2c_v(P)-c_v(nP)$, as we did in Tables \ref{tab:common_vals} and \ref{tab:common_vals_non_std}. If the Kodaira symbol is not $I_m$, then the value can be computed directly using Lemma \ref{tab:corr_terms}. If the Kodaira symbol is $I_m$, then one can easily prove that $a_{nP}\equiv a_Pn\mod m$. Therefore, \[n^2c_v(P)-c_v(nP)=n^2\dfrac{a_{P}\left( m-a_{P} \right)}{m}-\dfrac{n'\left( m-n' \right)}{m}\]
		with $n'$ the smallest positive integer such that $a_{P}n \equiv n' \bmod{m}$.
		
	\end{proof}
	\section{A corollary and an example}\label{sec:exam}
	In this section, we want to show an application of the main theorem and an example. Let $E$ be an elliptic curve defined by a Weierstrass equation with coefficients in the principal ideal domain $R$ with fraction field $K$. Given a point $P\in E(K)$, one can define a sequence of integral ideals $B_n$ that represents the square root of the denominator of the fractional ideal $(x(nP)) R$ (the fact that the denominator is a square follows from the fact that $E$ is defined by an equation with integer coefficients). The sequence of the $B_n$ is a so-called \textit{elliptic divisibility sequence}.
	If $R$ is a principal ideal domain, we consider a choice of $\beta_n$ such that $B_n=(\beta_n)$ and then we have a sequence $\{\beta_n\}_{n\in \N}$ of elements in $R$ such that $\beta_n$ generates the square root of the denominator of $(x(nP))R$. The choice of $\beta_n$ is clearly not unique.
	In \cite{recurrence}, it is proved that, if $K=\Q$ and we choose the $\beta_n$ in an appropriate way, the sequence $\{\beta_n\}_{n\in \N}$ satisfies a recurrence relation.
	The two main ingredients of the proof of this fact are the study of the behaviour of the sequence $k_{v,n}$ for all finite places in $\Q$ and a recurrence relation that is satisfied by the sequence $\{\psi_n(P)\}_{n\in \N}$.
	Thanks to the main theorem of this paper, we know the behaviour of $k_{v,n}$ in a very general case and then we can generalize the result of \cite{recurrence} to the case when $K$ is not $\Q$. Indeed, we can prove the following.
	\begin{corollary}
		Let $R$ be a principal ideal domain with field of fractions $K$. Let $E$ be an elliptic curve with a minimal Weierstrass model over $R$ and let $P\in E(K)$. Define $M(P)$ as the smallest positive integer such that $[M(P)]P$ is non-singular modulo every maximal ideal of $R$.
		
		We can define a sequence $\{\beta_n\}_{n\in \N}$ of elements in $R$ such that:
		\begin{itemize}
			\item $\beta_n$ generates the square root of the ideal generated by the denominator of $x(nP)$;
			\item For all triples $(n,m,r)$ of positive integers  such that $n\geq m\geq  r$ and any two of them are multiples of $M(P)$, we have
			\[
			\beta_{n+m}\beta_{n-m}\beta_r^2=\beta_{n+r}\beta_{n-r}\beta_m^2-\beta_{m+r}\beta_{m-r}\beta_n^2.
			\]
		\end{itemize}
	\end{corollary}
	\begin{proof}
		The proof follows easily from the proof of \cite[Theorem 1.9]{recurrence} and Theorem \ref{main}, so we just sketch it. We define $\beta_n$ such that $\beta_n^2=\psi_n^2(P)/\gcd(\phi_n(P),\psi_n^2(P))$. We can define the $\gcd$ since $R$ is a PID. We have
		\[
		\psi_{n+m}\psi_{n-m}\psi_r^2=\psi_{n+r}\psi_{n-r}\psi_m^2-\psi_{m+r}\psi_{m-r}\psi_n^2 \quad \text{    for all    }n\geq m\geq r.
		\]
		We divide this equation by \[\gcd(\phi_n(P),\psi_n^2(P))\cdot\gcd(\phi_m(P),\psi_m^2(P))\cdot\gcd(\phi_r(P),\psi_r^2(P))\] and we obtain
		\[
		L_{m,n}\beta_{n+m}\beta_{n-m}\beta_r^2=L_{n,r}\beta_{n+r}\beta_{n-r}\beta_m^2-L_{m,r}\beta_{m+r}\beta_{m-r}\beta_n^2
		\]
		where 
		\[
		L_{m_1,m_2}=\frac{\gcd(\phi_{m_2+m_1}(P),\psi_{m_2+m_1}^2(P))\gcd(\phi_{m_2-m_1}(P),\psi_{m_2-m_1}^2(P))}{\gcd(\phi_{m_2}(P),\psi_{m_2}^2(P))^2\gcd(\phi_{m_2}(P),\psi_{m_2}^2(P))^2}.
		\]
		Note that 
		\[
		v(L_{m_1,m_2})=k_{v,m_1+m_2}+k_{v,m_1-m_2}-2k_{v,m_1}-2k_{v,m_2}.
		\]
		If $m_1$ or $m_2$ is a multiple of $M(P)$, then $v(L_{m_1,m_2})=0$ for each $v$ thanks to Theorem \ref{main}. Therefore, $L_{m_1,m_2}=1$ and, since two of $n,m,r$ are multiples of $M(P)$, we have
		\[
		\beta_{n+m}\beta_{n-m}\beta_r^2=\beta_{n+r}\beta_{n-r}\beta_m^2-\beta_{m+r}\beta_{m-r}\beta_n^2.
		\]
	\end{proof}
	
	Our original goal was to compute $k_{v,n}$ when $K$ is a function field since, as we explained in the introduction, $k_{v,n}$ is related to elliptic divisibility sequences and elliptic divisibility sequences over function fields received much attention in the last years. Hence, we show an example where we explicitly compute $k_{v,n}$ in the function field case. For more details on the following examples, see \cite{Naskrecki_Acta}.
	\begin{example}
		Let $\kappa=\mathbb{C}$, $C=\mathbb{P}_\kappa^1$, and $K=\kappa(C)$. 
		Then, $K=\mathbb{C}(t)$. 
		Consider the elliptic curve $y^2=x(x-f^2)(x-g^2)$ defined over $K$ where $(f,g,h)=(t^2-1,2t,t^2+1)$. Notice that $f^2+g^2=h^2$. Let $P=((f-h)(g-h),(f+g)(f-h)(g-h))\in E(K)$. 
		We denote by $v_1$ the place such that $v_1(t-1)=1$. One can easily show that $E$ is singular modulo $v_1$ since $v_1(f)=1$. Moreover $v_1(g-h)=2$ and then $v_1(x(P))>0$ and $v_1(y(P))>0$. So, $P$ is singular modulo $v$. By direct computation, $v_1(\Delta_E)=4$ and $v_1(j(E))=-4$. Then, $E$ is in minimal form over $K_{v_1}$ and $E/K_{v_1}$ has Kodaira symbol $I_4$.
		By direct computation, $2P=(t^4 + 2t^2 + 1,-2t^5 + 2t )$ and then $2P$ is non-singular modulo $v_1$, observing that $v_1(x(2P))=0$. So, $a_{P,v_1} =2$ and $m_{P,v_1}=2$. Using Lemma \ref{tab:corr_terms}, we have
		\[
		c_{v_1}(P)=\frac{2(4-2)}{4}=1.
		\]
		Therefore, thanks to Theorem \ref{main},
		\[
		k_{v_1,n}(P)=\begin{cases}
			n^2-1	\text{ if $n$ is odd,}\\
			n^2		\text{ if $n$ is even.}
		\end{cases}
		\]
		This agrees with the direct computation of some cases for $n$ small. If $n=2$, then 
		\[
		\psi_2^2(P)=4(x(P)(x(P)-f^2)(x(P)-g^2))=16(t^2+2t-1)^2(t^2-2t+1)^2
		\]
		and $v_1(\psi_2^2(P))=4$.
		If $n=3$, then 
		\[
		\psi_3^2(P)=256(t^4+4t-1)^2(t^2+2t-1)^4(t-1)^8
		\]
		and so $v_1(\psi_3^2(P))=8$. Moreover, $v_1(\psi_4^2(P))=18$ and $v_1(x(P))=2$. Therefore, using $\phi_n(P)=x(P)\psi_n^2(P)-\psi_{n-1}(P)\psi_{n+1}(P)$ we obtain
		\begin{align*}
			\phi_2(P)=&\left(8(t^2+2t-1)(t^2-2t+1)\right)^2(2(t-1)^2)\\&-16(t^4+4t-1)(t^2+2t-1)^2(t-1)^4
		\end{align*}
		and then $v(\phi_2(P))= 4$. In the same way $v_1(\phi_3(P))= 10$. Hence, we conclude that $k_{v_1,2}=4$ and
		$k_{v_1,3}=8$.
	\end{example}
	
		\normalsize
	\baselineskip=17pt
	\bibliographystyle{alpha}
	\bibliography{biblioff}
	
	Bartosz Naskręcki, Faculty of Mathematics and Computer Science, Adam Mickiewicz University in Poznań, ul. Uniwersytetu Poznańskiego 4, 61-614, Poznań, Poland. 
	\\
	EMAIL: \url{bartosz.naskrecki@amu.edu.pl}
	
	Matteo Verzobio, Institute of Science and Technology Austria, Am Campus 1, 3400 Klosterneuburg, Austria. 
	\\
	EMAIL: \url{matteo.verzobio@gmail.com}
	
\end{document}